\tikzstyle{vertex}=[circle, draw, inner sep=0pt, minimum size=6pt]
\theoremstyle{plain}
\newtheorem{thm}{Theorem}[section]
\newtheorem{cor}[thm]{Corollary}
\newtheorem{lem}[thm]{Lemma}
\theoremstyle{definition}
\theoremstyle{remark} \tolerance=10000 \hbadness=10000
\def \ni{\noindent}
\author{Jismy Varghese \footnote{Email : kvjismy@gmail.com}\\ School of Computer Science\\
DePaul Institute of Science and Technology\\ Angamaly - 683 573\\ \vspace{0.3cm} Kerala, India.\\
Aparna Lakshmanan S.	\footnote{E-mail : aparnals@cusat.ac.in, aparnaren@gmail.com}\\
Department of Mathematics\\
Cochin University of Science and Technology\\Cochin -
22\\\vspace{0.2cm} Kerala, India.}
\title{\textbf{Perfect Italian Domination Number of Graphs}}
\date{}
\begin{document}
	
	\maketitle
	
	\begin{abstract}
		
		\ni\line(1,0){360}\\ In this paper, an upper bound for the perfect Italian domination number of the cartesian product of any two graphs is obtained and the exact value of this parameter for cartesian product of some special graphs are obtained. We have also proved that for any two positive integers $a$, $b$ there exists a graph $G$ and an induced subgraph $H$ of $G$ such that $ \gamma^{p}_{I}(G) = a $ and  $ \gamma^{p}_{I}(H) = b $. Relationship of the perfect Italian domination number with the Roman domination number and the perfect domination number of a graph $G$ are obtained and the corresponding realization problems are also solved. We have also obtained the perfect Italian domination number of the Mycielskian of a graph in terms of the perfect domination number of the graph. Some open problems related to this parameters are also included.\\
		\ni\line(1,0){360}\\
		\ni {\bf Keywords:} Roman domination number, Perfect domination number, Perfect Italian domination number, Cartesian product, Mycielskian of a graph.  \\
		
		\ni {\bf AMS Subject Classification:} {\bf primary: 05C69, secondary: 05C76}\\
		\ni\line(1,0){360}\\
	\end{abstract}

\section{Introduction}

Let G be a simple graph with vertex set $V(G)$ and edge set $ E(G) $. If there is no ambiguity in the choice of $G$, then we write $V(G)$ and $E(G)$ as $V$ and $E$ respectively. A subset $ S \subseteq V(G)$ of vertices  is called a dominating set if every $ v\in V(G)$ is either an element of $S$ or is adjacent to an element of $S$ \cite{Tha}. The domination number, $\gamma(G) $ is the minimum cardinality of a dominating set of $G$. A dominating set $ S $ is a perfect dominating set if $\rvert N(v) \bigcap S \rvert =1  $ for each $ v \in V-S $, where $ N(v) $ is the collection of all vertices that are adjacent to the vertex $v$. The perfect domination number, $ \gamma_{p}(G) $ is the minimum cardinality of a perfect dominating set of $ G $ \cite{Tha}.\\

 The weight of a function $ f $ defined on the vertex set $ V $ of a graph $ G $, $ f(V)  $ is $ \sum_{ u\in V}f(u) $. A map $ f:V(G)\rightarrow\{0,1,2\} $ is a Roman dominating function for a graph $G$ if for every vertex $v$ with $f(v)=0$, there exists at least one vertex $ u \in N(v) $ such that $ f(u)=2$. The minimum weight of a Roman dominating function on $G$ is called the Roman domination number of $G$, $ \gamma_R(G) $  \cite{EJC}.\\

  An Italian dominating function, of a graph $G$ is a function $ f: V(G) \rightarrow \{0,1,2\} $ satisfying the condition that for every $ v\in V(G) $ with $ f(v) = 0, \sum_{ u\in N(v)} f(u) \geq 2, $ i.e., either $v$ is adjacent to a vertex $u$ with $ f(u) = 2 $ or to at least two vertices $x$ and $y$ with $ f(x) = f(y) = 1. $ The Italian domination number of $ G $, $ \gamma_I(G) $ is the minimum weight of an Italian dominating function on $ G $ \cite{Mustha}.\\

   A function $ f: V(G)\rightarrow \{0,1,2\} $ is a perfect Italian dominating function (abbreviated as PID-function) on $G$ if for every vertex $ v \in V(G) $ with $ f(v) = 0 $, $ \sum_{ u\in N(v)}f(u)=2 $. The perfect Italian domination number of $G$, $ \gamma^{p}_{I}(G), $ is the minimum weight of a PID-function of $G$. A PID-function of $G$ with weight $ \gamma^{p}_{I}(G) $ is called a $ \gamma^{p}_{I}(G)$-function of $G$ \cite{Twh}.\\

    We also denote a function $ f:V(G)\rightarrow\{0,1,2\} $ as $ f= (V_0^f,V_1^f,V_2^f) $ or simply $ (V_0,V_1,V_2) $, where $ V_i $ is the set of all vertices which are assigned the value $ i $ for $ i=0,1,2 $. For any subgraph $ H $ of $ G $, the sum of the weights of the vertices of $ H $ is denoted by $ f(H) $. i.e., $f(H)= \sum_{u \in V(H)}f(u)$. In \cite{Lau} the authors characterize the graphs $ G $ with $ \gamma_I^p(G) $ equal to 2 and 3 and determined the exact value of the parameter for several simple structured graphs. It is also proved that it is NP-complete to decide whether a given bipartite graph admits a perfect Italian dominating function of weight $ k $. The perfect Italian domination number of Sieri\'{n}ski graphs and generalized Sierpi\'{n}ski graphs are studied in \cite{Jis}  and \cite{Ji} respectively.\\

  For disjoint graphs $ G $ and $ H $, the join $ G+H $ has vertex set $ V(G)\cup V(H) $ and edge set $ E(G) \cup E(H) \cup \{uv: u \in V(G)\ and\ v\in V(H)\}$ \cite{Tha}. The Cartesian product   of two graphs $G$ and $H$, $G \square H$ has vertex $ V(G) \times V(H) $ and two vertices $ (u_1,v_1) $ and $(u_2,v_2)$ are adjacent if either $u_1=u_2$ and $v_1v_2 \in E(H)$ or $v_1=v_2$ and $u_1u_2 \in E(G)$ \cite{Jacob}. It is a simple observation that $G \square H$ can be partitioned as $ \lvert V(H)\lvert $ copies of $G$ and $\lvert V(G) \lvert$ copies of $H$.\\

   The Mycielskian  of a graph $ G $, $ M(G) $ is the graph with vertex set $ V(G) \cup V'(G) \cup \{w\} $ where $ V'(G)= \{u_i: v_i \in V(G)\} $ and edge set $ E(G) \cup \{v_iu_j : v_iv_j \in E(G) \} \cup \{ wu_i : u_i \in V'(G) \}. $ The double Roman domination number and the Italian domination number of the Mycielskian of a graph have been studied in \cite{Anu1} and \cite{Jism}. \\

The following observations are simple.\\

\ni\textbf{Observation 1.} For a graph with no edge and $n$ vertices, $ \gamma^{p}_{I}(G) = n.$\\

\ni\textbf{Observation 2.} For any complete bipartite graph $ K_{p,q}, $
 \[
\gamma^{p}_{I}(K_{p,q}) =
\begin{cases}
4,\  p,q\geq3,\\
2,\  otherwise.
\end{cases}
\]

\ni\textbf{Observation 3.} For complete graph $ K_m, $ $ \gamma^{p}_{I}(K_m) = 2. $\\

\ni\textbf{Observation 4.} For every graph $G$, $ \gamma(G) \leq \gamma_{I}(G) \leq \gamma^{p}_{I}(G).$\\

\ni\textbf{Observation 5.} Let $G$ be a graph. $ \gamma^{p}_{I}(G) = 2 $ if and only if $ G = H_1 \vee H_2$ where $ H_1 = K_1, K_2\  or\  2K_1 $.	
\begin{proof}
If $\gamma^{p}_{I}(G)= 2,$ in a PID-function of $G$, either a vertex $v$ is assigned the value 2 and all the remaining vertices are adjacent to $v$ or two vertices $v$ and $w$ are assigned the value 1 and all the remaining vertices are adjacent to both $v$ and $w$. The adjacency between $v$ and $w$ is optional. Therefore, $G$ is $ K_1\vee H_2,\ K_2\vee H_2\ or\ 2K_1\vee H_2. $ The converse is a simple observation.
\end{proof}
All notations and terminology not mentioned here are from \cite{Bal}.\\

\section{Cartesian Product}
 In this section, we have obtained an upper bound for the Cartesian product of two graphs in terms of the original graph. Exact values for some special classes are also obtained.

 \begin{thm} \label{thm8}
 	For any graphs G and H
 	\begin{center}
 		$ \gamma^{p}_{I}(G \square H) \leq min\{|V(H)|\gamma^{p}_{I}(G),|V(G)|\gamma^{p}_{I}(H)\}. $
 	\end{center}
 \end{thm}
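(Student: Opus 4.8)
The plan is to exploit the partition of $G \square H$ into $|V(H)|$ disjoint copies of $G$, one for each vertex of $H$, as noted in the introduction. Starting from a $\gamma^{p}_{I}$-function $f$ of $G$, I would lift it to $G \square H$ by ignoring the $H$-coordinate entirely: define $F(u,v) = f(u)$ for every $(u,v) \in V(G) \times V(H)$. This replicates the optimal assignment of $G$ across every copy, so its weight is immediately $\sum_{v \in V(H)} \sum_{u \in V(G)} f(u) = |V(H)|\,\gamma^{p}_{I}(G)$.

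The crux is to verify that $F$ is a PID-function. Fix a vertex $(u,v)$ with $F(u,v) = 0$, equivalently $f(u) = 0$. Its neighborhood in $G \square H$ splits into the $G$-direction neighbors $\{(u',v) : u' \in N_G(u)\}$ and the $H$-direction neighbors $\{(u,v') : v' \in N_H(v)\}$. Over the first set $F$ sums to $\sum_{u' \in N_G(u)} f(u') = 2$, because $f$ is a PID-function of $G$ and $f(u) = 0$. Over the second set every vertex carries the value $f(u) = 0$ and contributes nothing. Hence $\sum_{w \in N(u,v)} F(w) = 2$, which is exactly the perfect Italian condition. This is the only step with real content, and the observation that makes it work is that all $H$-direction neighbors share the coordinate $u$ and therefore inherit the value $0$.

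The argument gives $\gamma^{p}_{I}(G \square H) \leq |V(H)|\,\gamma^{p}_{I}(G)$; running the same construction with the roles of $G$ and $H$ interchanged, using the partition of $G \square H$ into $|V(G)|$ copies of $H$, yields $\gamma^{p}_{I}(G \square H) \leq |V(G)|\,\gamma^{p}_{I}(H)$. Taking the smaller of the two upper bounds finishes the proof. I do not anticipate a genuine obstacle: the entire difficulty is the verification in the second paragraph, and it hinges solely on the two-part structure of neighborhoods in a Cartesian product.
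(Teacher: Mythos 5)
Your proposal is correct and follows essentially the same route as the paper: lift a $\gamma^{p}_{I}$-function of $G$ to $G \square H$ by ignoring the $H$-coordinate, observe that all $H$-direction neighbors of a zero-weight vertex also carry weight $0$ so the perfect condition transfers exactly, and then symmetrize and take the minimum. If anything, your verification of the PID condition is stated more carefully than the paper's.
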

 \begin{proof}
 	Let $g$ be $ \gamma^{p}_{I}  $-function of $G$. Let $ f:V(G)\times V(H)\rightarrow\{0,1,2\} $ be $ \gamma^{p}_{I} $-function of $ G \square H $ defined by $f(u,v)=g(u)$, for every $ u\in V(G)\ $ and $ \ v\in V(H) $. Then a vertex $(u,v)$ has weight zero, then it has neighbors with weight exactly two and all other vertices which are adjacent to $(u,v)$ has weight zero. Therefore, $f$ is a $ \gamma^{p}_{I}  $-function, and $ \gamma^{p}_{I}(G \square H) \leq |V(H)|\gamma^{p}_{I}(G)$.  Using the same arguments we can prove that $ \gamma^{p}_{I}(G \square H) \leq |V(G)|\gamma^{p}_{I}(H).$ Therefore,
 	\begin{center}
 		$ \gamma^{p}_{I}(G \square H) \leq min\{|V(H)|\gamma^{p}_{I}(G),|V(G)|\gamma^{p}_{I}(H)\}. $
 	\end{center}
 \end{proof}
 There are examples of pairs of graphs for which equality and strict inequality of the above theorem are attained. For instance, let $ G = P_4 $ and $ H=P_2 $. Then $ \gamma^{p}_{I}(G \square H)\ = 4 < 6 = min\{|V(H)|\gamma^{p}_{I}(G),|V(G)|\gamma^{p}_{I}(H)\} $ and  let $ G = K_{1,3} $ and  $ H = P_3.  $ Then $ \gamma^{p}_{I}(G \square H)\ = 6 = min\{|V(H)|\gamma^{p}_{I}(G),|V(G)|\gamma^{p}_{I}(H)\}.$\\

The following theorem proved in \cite{Aliz} is used in the proof of Theorem \ref{thm6}
\begin{thm}(\cite{Aliz}.)
	$  \gamma_{I}(P_2\square P_n)=n. $
\end{thm}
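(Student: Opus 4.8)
The plan is to prove the two inequalities $\gamma_I(P_2\square P_n)\le n$ and $\gamma_I(P_2\square P_n)\ge n$ separately, working in the natural grid coordinates: write the vertices of $P_2\square P_n$ as $(i,j)$ with $i\in\{1,2\}$ and $1\le j\le n$, so that column $j$ is the vertical edge $\{(1,j),(2,j)\}$, and set $w_j=f(1,j)+f(2,j)$ for the weight that a function $f$ places on column $j$. For the upper bound I would simply exhibit a function of weight $n$: assign $f(1,j)=1$ for odd $j$ and $f(2,j)=1$ for even $j$ (and $0$ elsewhere), which has total weight $n$, and check that every zero vertex sees neighbours summing to at least $2$. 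The interior verifications are immediate, and the only care is at the two end columns $j=1$ and $j=n$, where a corner vertex must be covered by its horizontal and vertical neighbour together.

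The substance is the lower bound $\sum_{j=1}^n w_j\ge n$, which I would obtain by a charging argument on columns. The key local fact is that an empty column is expensive for its neighbours: if $w_j=0$ then both $(1,j)$ and $(2,j)$ are zero vertices, so inspecting their horizontal neighbours gives $f(1,j-1)+f(1,j+1)\ge 2$ and $f(2,j-1)+f(2,j+1)\ge 2$, whence $w_{j-1}+w_{j+1}\ge 4$ (reading any out-of-range column as weight $0$). Writing $z$ for the number of empty columns and $\epsilon_h=w_h-1\ge 0$ for the excess of each non-empty column $h$, this forcing says that for every empty column the excesses of its non-empty neighbours sum to at least $2$: if it has just one such neighbour, that neighbour already carries weight $4$, and if it has two, their excesses add to at least $2$.

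From here I would finish by double counting. Summing the forcing inequality over all empty columns yields a total of at least $2z$; on the other side, each non-empty column $h$ is a neighbour of at most two columns, so its excess $\epsilon_h$ is counted at most twice, giving a total of at most $2\sum_h\epsilon_h$. Hence $\sum_h\epsilon_h\ge z$, and therefore $\sum_{j=1}^n w_j=(n-z)+\sum_h\epsilon_h\ge(n-z)+z=n$, which combined with the construction gives equality.

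I would expect the main obstacle to lie in making the local forcing statement uniform across all configurations, rather than in the double count itself. One must check that the bound ``excess of non-empty neighbours $\ge 2$'' survives the degenerate cases — a pair of adjacent empty columns, which forces a weight-$4$ column on the outside, and an empty column at the boundary, which has only one neighbour — and observe in passing that three consecutive empty columns cannot occur, since then the middle column's vertices would be uncovered. Once all of these are absorbed into the single inequality $w_{j-1}+w_{j+1}\ge 4$, the averaging step is routine.
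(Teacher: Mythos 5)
Your proposal is correct, but there is nothing in the paper to compare it against: the paper does not prove this statement at all --- it simply imports it from the reference \cite{Aliz} (``The following theorem proved in \cite{Aliz} is used in the proof of Theorem \ref{thm6}''). So what you have written is a genuine addition rather than an alternative to an existing argument. Your two halves both check out: the alternating assignment $f(1,j)=1$ for $j$ odd, $f(2,j)=1$ for $j$ even is a valid Italian dominating function of weight $n$, and the lower bound via column weights is sound --- the forcing inequality $w_{j-1}+w_{j+1}\ge 4$ for an empty column $j$ (with out-of-range columns read as $0$) does cover all degenerate cases, since an empty neighbour or a missing boundary neighbour forces the remaining neighbour to carry weight exactly $4$, i.e.\ excess $3\ge 2$, and three consecutive empty columns are impossible. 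The double count then correctly yields $\sum_h \epsilon_h \ge z$ and hence total weight at least $(n-z)+z=n$. This kind of discharging-on-columns argument is the natural elementary route and makes the paper self-contained at this point, which is a real benefit given that the citation is to an arXiv preprint. One caveat you should state explicitly: the formula fails at $n=1$, since $P_2\square P_1=K_2$ and $\gamma_I(K_2)=2$, and indeed your upper-bound construction silently needs $n\ge 2$ (a corner vertex must have a horizontal neighbour to reach weight $2$); so the theorem, and your proof, should carry the hypothesis $n\ge 2$.
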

\begin{thm} \label{thm6}
		\[
	 \gamma^{p}_{I}(P_2 \square P_n) = \begin{cases} n+1;\ if\ n\  =\ 1,3,5\\
	 n;\ otherwise.
	\end{cases}
	\]
\end{thm}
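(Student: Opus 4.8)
The plan is to sandwich $\gamma^{p}_{I}(P_2 \square P_n)$ between a lower bound coming from Italian domination and explicit constructions that match it. For the lower bound I would invoke Observation 4 together with the quoted result $\gamma_{I}(P_2 \square P_n)=n$ to get $\gamma^{p}_{I}(P_2 \square P_n)\ge n$ for every $n$. Writing each vertex as $(i,j)$ with $i\in\{1,2\}$ indexing the two rows and $j\in\{1,\dots,n\}$ the columns, the whole problem then reduces to deciding, for each parity of $n$, whether the value $n$ is attainable or must be increased by one.

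For the generic (non-exceptional) cases I would build weight-$n$ PID-functions column by column. When $n$ is even, the period-four ``staircase'' given by $f(1,j)=1$ for $j\equiv 0,1\pmod 4$ and $f(2,j)=1$ for $j\equiv 2,3\pmod 4$ (all other values $0$) places exactly one $1$ in each column, so $f(V)=n$, and a direct check shows every $0$-vertex sees neighbour-weight exactly $2$, including at both ends. For odd $n\ge 7$ the same staircase fails because of a parity mismatch between the two ends, so I would splice a short, weight-neutral ``defect'' block into the middle: three consecutive columns carrying the values $(0,0),(2,1),(0,0)$ contribute total weight $3$ over $3$ columns and flip the phase of the staircase, allowing compatible staircases on either side to close correctly at both boundaries. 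Since both the staircase and the defect block average weight one per column, the total is again $n$; I would then verify the PID-condition locally at the two staircase--defect interfaces and at the two ends.

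For the exceptional values $n=1,3,5$ I would prove $\gamma^{p}_{I}=n+1$ in two steps. The upper bound $\le n+1$ comes from the diagonal pattern placing a single $2$ in each odd column $2k-1$, alternating rows, whose weight is $2\lceil n/2\rceil=n+1$ for odd $n$ and which is readily checked to be a PID-function. The matching lower bound $\ge n+1$, equivalently the non-existence of a weight-$n$ PID-function, is where the real work lies. I would argue structurally: in a weight-$n$ function every column carries average weight one, and a short analysis of how a $0$-vertex can be perfectly dominated (one $2$-neighbour, or exactly two $1$-neighbours) forces the configuration to be either a pure one-per-column staircase or to contain the weight-neutral defect block above. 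The staircase option is obstructed by the end-conditions exactly when $n$ is odd, since the two ends demand opposite phases; the defect block needs at least three interior columns with staircase room on both sides, i.e. $n\ge 7$. For $n=1$ the claim is immediate, as $P_2\square P_1=K_2$, and for $n=3,5$ the ladder is too short to accommodate the defect, so no weight-$n$ PID-function exists.

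I expect the main obstacle to be the lower bound for $n=3,5$: turning the informal ``staircase-or-defect'' dichotomy into a rigorous argument requires controlling all the ways a $0$-vertex can receive weight exactly $2$ and ruling out stray $2$'s near the four corners, which is a finite but somewhat delicate case analysis. The construction side is comparatively routine once the defect block is identified; the only care needed is matching the staircase phase to the block and to each boundary for every odd $n\ge 7$.
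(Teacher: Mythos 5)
Your proposal follows the same skeleton as the paper: the lower bound $\gamma^{p}_{I}(P_2\square P_n)\ge n$ via Observation 4 together with $\gamma_{I}(P_2\square P_n)=n$, plus explicit constructions, and your even-$n$ staircase is literally the paper's pattern. Where you differ is instructive. For odd $n\ge 7$ the paper uses a global period-six formula ($2$ at $u_j$ for $j\equiv 4\pmod 6$, $1$ at $u_j$ for $j\equiv 1\pmod 6$, $1$ at $v_j$ for $j$ even); your spliced defect block $(0,0),(2,1),(0,0)$ flanked by $(0,1)$ columns inside a staircase is the same local gadget, but your version is actually the more robust one: the paper's formula only closes up correctly when $n\equiv 1\pmod 6$ (for $n=9$ the vertex $u_9$ gets neighbour-weight $0$, and for $n=11$ the vertex $v_{11}$ gets neighbour-weight $1$), whereas your splice, with the phase matched on each side, works for every odd $n\ge 7$. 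You also correctly identify $P_2\square P_1$ as $K_2$, where the paper mistakenly calls it $C_4$ (the value $2$ is right either way). Most importantly, you recognize that the claimed values $n+1$ for $n=1,3,5$ require proving that weight $n$ is unattainable; the paper omits this step entirely, giving only weight-$(n+1)$ constructions, which establish $\gamma^{p}_{I}\le n+1$ rather than equality. Your plan for that step, the staircase-or-defect dichotomy, is still only a sketch and would take real work to justify in the stated generality, but for $n=3$ and $n=5$ it can be replaced by a direct finite check of all weight-$n$ assignments (and $n=1$ is trivial), so your approach does go through. Net: same strategy as the paper, with your proposal repairing the two defects in the paper's own argument and leaving one routine finite verification to write out.
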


\begin{proof}

Let $ f = (V_0,V_1,V_2) $ be the $ \gamma^{p}_{I}  $ function of $ P_2 \square P_n. $ Let $ u_1, u_2, u_3,...,u_n $ be the vertices of the first copy of $ P_n $ and $ v_1,v_2, v_3,...,v_n $ be the vertices of the second copy of $ P_n.$
We know that $ \gamma_{I}(P_2 \square P_n) = n  $ and by observation 4, $ \gamma_{I}(G) \leq \gamma^{p}_{I}(G). $ Therefore, $ \gamma^{p}_{I}(P_2 \square P_n)\geq n. $\\

\ni When $n=2$, $ P_2 \square P_n $ is $ C_4 $ and $ \gamma^{p}_{I} (C_4)= 2 $.\\

\ni When $n = 3$, define $f$ as follows.
\[
f(u)=
\begin{cases}
2;\  u=v_3,\\
1;\ u=u_1,v_2,\\
0;\ otherwise.
\end{cases}
\]
  Then  $ \gamma^{p}_{I}(P_2 \square P_n) = 4. $ \\

\ni When $ n = 4$, define $f$ as follows.
\[
f(u) =
\begin{cases}
1;\ u=u_2,u_3,v_1,v_4,\\
0;\ otherwise.
\end{cases}
\]

Then  $ \gamma^{p}_{I}(P_2 \square P_n) = 4. $ \\

\ni  When $n = 5$, define $f$ as follows.
  \[
  f(u)=
  \begin{cases}
  2;\  u=u_5\\
  1;\ u=u_1,u_4,v_2,v_3,\\
  0;\ otherwise.
  \end{cases}
  \]
  Then $ \gamma^{p}_{I}(P_2 \square P_n) = 6.$
 \\

\ni When $ n = 6$, define $f$ as follows.
\[
f(u) =
\begin{cases}
1;\ u=u_2,u_3,u_6,v_1,v_4,v_5,\\
0;\ otherwise.
\end{cases}
\]

Then  $ \gamma^{p}_{I}(P_2 \square P_n) = 6. $ \\

When $n\geq7$, and $n$ is odd, define $f$ as follows.
\[
f(u) =
\begin{cases}
2;\ u=u_j,\ j\equiv 4(mod6)\\
1;\  u=u_j,\ j\equiv 1(mod6),\\\hspace{.6cm}u=v_j,\ j\equiv 0(mod2);\\
0;\  otherwise.
\end{cases}
\]
When $n$ is even, define $f$ as follows
\[
f(u) =
\begin{cases}
1;\  u=u_j,\ j\equiv 0,1(mod4),\\\hspace{.6cm}u=v_j,\ j\equiv 2,3 (mod4);\\
0;\  otherwise.
\end{cases}
\]
Clearly, in each case, $f$ is a $ \gamma_I^p $-function and $ f(V) = n. $ Hence the theorem.

\end{proof}

\begin{thm} \label{thm7}
If $m$ and $n$ are positive integers then
\[ \gamma^{p}_{I}(K_m \square K_n) =
\begin{cases}
n;\ if\  m=n\\
min\{2m,2n\};\ otherwise.
\end{cases}\]
\end{thm}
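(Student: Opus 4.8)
The plan is to reduce the whole problem to one clean local condition and then dispatch both directions by a short case analysis. Writing the vertices of $K_m \square K_n$ as cells $(i,j)$ with $i\in\{1,\dots,m\}$ (rows) and $j\in\{1,\dots,n\}$ (columns), two cells are adjacent exactly when they share a row or a column. For a candidate function $f$ I would set $w_i=\sum_j f(i,j)$ and $c_j=\sum_i f(i,j)$, so that the total weight is $W=\sum_i w_i=\sum_j c_j$. The first step is to observe that the neighbourhood sum at a cell $(i,j)$ equals $w_i+c_j-2f(i,j)$; hence $f$ is a PID-function if and only if $w_i+c_j=2$ for every cell with $f(i,j)=0$. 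This identity is the engine of the entire proof, and I would assume $m\le n$ throughout by symmetry.

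For the upper bounds I would exhibit explicit functions and verify them against this condition. When $m<n$, put weight $2$ on every cell of a single column and $0$ elsewhere; every zero cell then sees exactly the weight-$2$ cell of its own row, so the condition holds and $W=2m$. (This matches the bound $\min\{2m,2n\}$ already guaranteed by Theorem \ref{thm8} together with Observation 3.) When $m=n$, put weight $1$ on the diagonal cells $(i,i)$ and $0$ elsewhere; a zero cell $(i,j)$ with $i\ne j$ sees exactly $(i,i)$ and $(j,j)$, which sum to $2$, so $W=n$. Both checks are immediate.

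For the lower bounds I would run a case analysis on the row weights using $w_i+c_j=2$. If every $w_i\ge 2$ then $W\ge 2m$ at once; if some row is all zero then the identity forces $c_j=2$ for all $j$, so $W=2n$, which is $\ge 2m$ and $\ge n$. For $m=n$ this already finishes, since the only remaining possibility is that all $w_i\ge 1$, giving $W\ge n$. The delicate case, which I expect to be the main obstacle, is $m<n$ with every row nonzero but some row of weight exactly $1$: its single nonzero entry lies in a column $j_0$, and the identity then forces $c_j=1$ for all $j\ne j_0$. I would show that if additionally some row $i_1$ has weight $\ge 2$, then the condition forces $f(i_1,j)=1$ for every $j\ne j_0$ and confines all other rows to column $j_0$; since no row is zero and $f(i_1,j_0)\ge 1$ (otherwise $(i_1,j_0)$ would be a zero cell with $w_{i_1}\ge n-1\ge 2$, forcing the impossible $c_{j_0}\le 0$), one gets $c_{j_0}\ge m$ and hence $W=(n-1)+c_{j_0}\ge (n-1)+m\ge 2m$. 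If instead every row has weight exactly $1$, then $W=m=(n-1)+c_{j_0}$ gives $c_{j_0}=m-n+1\le 0$, forcing $m=n-1$ and an all-zero column $j_0$, which contradicts $w_i=1$ for all $i$. The tightest point is the boundary $n=m+1$, where every inequality is met with equality and must be tracked carefully; I would dispose of the degenerate small cases $m=1$ and $m=2$ separately to keep the counting clean.
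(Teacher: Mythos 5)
Your proof is correct, and at bottom it travels the same road as the paper's: the same two extremal assignments for the upper bounds (a doubled column when $m<n$, the diagonal when $m=n$), and a case analysis on row sums for the lower bounds, hinging on the same two pivotal configurations (an all-zero row forces every column sum to be $2$, hence weight $2n$; a weight-one row forces every other column sum to be $1$, hence weight at least $(n-1)+m\geq 2m$). The genuine difference is one of packaging, and it is an improvement: you isolate the identity that the neighbourhood sum at $(i,j)$ equals $w_i+c_j-2f(i,j)$, so that $f$ is a PID-function if and only if $w_i+c_j=2$ at every zero cell, and then every case is a short computation valid for an \emph{arbitrary} PID-function, not just a minimum one. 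This buys two things the paper's proof does not have. First, your $m=n$ lower bound is two lines (a zero row gives $W=2n$; otherwise all $w_i\geq 1$ gives $W\geq n$), whereas the paper proves the stronger structural claim that every row has weight exactly $1$ through subcases, one of which (``Case (b)'') is only sketched by analogy. Second, the paper's dichotomy ``either every $\gamma^{p}_{I}$-function has all row sums equal to $2$, or some row sums to less than $2$'' is not a logical negation --- a row sum could exceed $2$ --- and is rescued only by an implicit appeal to minimality ($f(V)\leq 2m$); you avoid this entirely by handling the subcase ``all $w_i\geq 2$'' directly, and you replace the paper's pigeonhole step (fewer rows than columns) with the direct subcase ``some row has weight $\geq 2$.'' Finally, the small cases you defer are harmless: $m=1$ is just $K_n$ (Observation 3), and for $m=2$ (so $n\geq 3$) your inequality $w_{i_1}\geq n-1\geq 2$ already holds, so in fact no separate treatment is needed.
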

\begin{proof}
Let $ f = (V_0,V_1,V_2) $ be the $ \gamma^{p}_{I}  $-function of $ K_m \square K_n.$ As we have already mentioned in the introduction $ K_m \square K_n $ can be viewed as $m$ rows of $ K_n $ and $n$ columns of $ K_m.$ Let $ u_{i,j}, $ $i=1,2,..m$ and $j=1,2,...n$ be the vertices of $ K_m \square K_n.$\\

\ni{\bf Case 1:} $ m=n. $\\

Define $f$ as follows.
\[
f(u_{ij})=
\begin{cases}
1;\  i=j,\\
0;\ otherwise.
\end{cases}
\]
Then $ \gamma^{p}_{I}(K_n \square K_n) \leq n. $\\
\ni{\bf Claim:} Exactly one vertex in each copy of $ K_n $ has weight 1.\\
 If possible assume that there exists a copy of $ K_n $ in which all vertices have weight 0. Then these vertices are dominated by vertices from corresponding columns. Then each column should have weight 2, i.e., $f(V)=2n>n$.\\ If possible assume that there exist a copy of $ K_n $ which has weight at least 2. Then either there is a vertex with weight 2 or two vertices with weight 1 each in that row.\\

\ni {\bf Case (a):} Let $ u_{ij} $ and $ u_{ik} $ be the two vertices with weight 1.\\

 Then in the $ i^{th} $ row either all  other vertices  have  weight 1 or all other vertices have weight 0. If all other vertices are assigned zero then vertices in the corresponding column is zero. In order to dominate these vertices  we have to assign weight 2 in each row. Then $ f(V)=2n >n.$ If all other vertices are assigned weight 1, then to dominate any vertex with weight 0 in any other row we have to assign a vertex with weight 1 in each row. Then $ f(V)= 2n-1>n. $ \\

\ni{\bf Case(b):} Let $ u_{ij} $ be a vertex in $ i^{th} $ row that has weight 2.\\

 Similar to case(a), we can prove that, in this case also $ f(V)=2n >n.$\\ Therefore, weight of each row is one and hence, $ \gamma^{p}_{I}(K_n \square K_n) = n. $\\

\ni{\bf Case 2:} $ m\neq n. $\\

Without loss of generality, let $m<n$. Define $f$ as follows.
\[
f(u_{ij})=
\begin{cases}
2;\  i=1,2,...m\  and\ j=1,\\
0;\ otherwise.
\end{cases}
\]
Then $ \gamma^{p}_{I}(K_m \square K_n)\leq 2m = min\{2m,2n\}. $\\
If for every $ \gamma_I^p $-function $f$, $ \sum_{j=1}^n f(u_{ij})=2,$ for each $i$, then $ \gamma^{p}_{I}(K_m \square K_n) = 2m. $ Therefore, assume that there exists a $ \gamma_I^p $-function $f$ such that $ \sum_{j=1}^n f(u_{ij})<2$ for some $i=k$. Therefore, $ \sum_{j=1}^{n}f(u_{kj}) = 0 $ or 1. \\

If $ \sum_{j=1}^{n}f(u_{kj}) = 0 $ then to dominate $ u_{kj} $ for $j=1,2,...n$, $ \sum_{i=1}^{m}f(u_{ij}) =2 $ which implies $f(V)= 2n >2m,$ which is a contradiction to the fact that $f$ is a $ \gamma_I^p $-function.\\

 If  $ \sum_{j=1}^{n}f(u_{kj}) = 1 $ then there exists $ l$ such that $ f(u_{kj})=0, $ if $ j\neq l $ and $ f(u_{kl})=1. $ But then to dominate $ u_{kj},\ j\neq l,\  $$ \sum_{i=1}^{m}f(u_{ij}) =1. $ i.e., exactly one vertex in each column has weight 1 and all other vertices have weight 0. But number of rows is less than number of columns. Therefore, there are more than one vertex with weight 1 in at least one row, say $ i=k^{'}. $ But, then
$ \sum_{j=1}^n f(u_{k^{'}j})=2$ or $ n. $ If $ \sum_{j=1}^n f(u_{k^{'}j})=2$ then exactly two vertices in the $ {(k^{'})}^{th} $ row have weight 1 and all others have weight 0. Also, the column containing this 0's must be full of 0's. But this contradicts the fact that $ \sum_{i=1}^{m}f(u_{ij}) =1, $ for all $ j\neq l. $ Therefore, $ \sum_{j=1}^n f(u_{k^{'}j})=n.$ But then $ f(V)= n+m-1 \geq m+1+m-1 $ (since, $ n\geq m+1 $) $=2m.  $ \\

Therefore, if $ m\neq n  $ then $ \gamma^{p}_{I}(K_m \square K_n) = 2m, $ where $ m<n. $

\end{proof}

\section{Realization problems}
\begin{thm}\label{thm5}
	Given any two positive integers $a,\ b$ $ \geq 3, $ there exist a graph G and induced subgraph $H$ of $G$ such that $ \gamma^{p}_{I}(G) = a  $ and $ \gamma^{p}_{I}(H)= b. $
\end{thm}

\begin{proof}
	\ni{\bf Case 1:} $ b\leq a. $\\
	
	Let $ G=P_{2a-1} $ and $ H= P_{2b-1} $. Then $ \gamma^{p}_{I}(G) = \lceil\frac{2(a-1)+1}{2}\rceil = a,  $ and $ \gamma^{p}_{I}(H) = \lceil\frac{2(b-1)+1}{2}\rceil = b.  $\\
	
	\ni{\bf Case 2:} $ b>a $.\\
	
	Let $ v_1,v_2,...v_{2b-1} $ be a path on $2b-1$ vertices. Construct $G$ as follows. Let $u$ and $v$ be two vertices adjacent to $ v_{2a-3},v_{2a-2},...v_{2b-1} $ and let $ v_{2a-4} $ be adjacent to $v$ alone. Clearly, $ \gamma^{p}_{I}(G) = \lceil\frac{2a-5+1}{2}\rceil+2=a.$ Also $ H=P_{2b-1} $ is an induced subgraph and $ \gamma^{p}_{I}(H)=b.$
\end{proof}

\begin{lem}
	For any graph G, $ \gamma_R(G) \leq 2\gamma^{p}_{I}(G) -1.$
\end{lem}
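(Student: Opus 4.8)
The plan is to take a $\gamma^{p}_{I}$-function $f=(V_0,V_1,V_2)$ of $G$ with weight $\gamma^{p}_{I}(G)$ and convert it into a Roman dominating function whose weight is at most $2\gamma^{p}_{I}(G)-1$. Recall that a Roman dominating function requires every vertex assigned $0$ to have a neighbor assigned $2$, whereas a PID-function allows a $0$-vertex to be dominated either by a single $2$-neighbor or by exactly two $1$-neighbors. So the two $1$-neighbor option is the only thing that prevents $f$ from already being a Roman dominating function. The natural idea is to promote $1$'s to $2$'s: define $h$ by setting $h(v)=2$ whenever $f(v)=1$, and $h(v)=f(v)$ otherwise. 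Then every vertex in $V_0$ that was PID-dominated by two $1$'s now sees a $2$, and every vertex PID-dominated by a single $2$ still sees that $2$, so $h$ is a Roman dominating function.

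First I would verify that $h$ is indeed a valid Roman dominating function, which is the straightforward case-check just described. Next I would bound its weight. Writing $|V_1|=n_1$ and $|V_2|=n_2$, we have $\gamma^{p}_{I}(G)=n_1+2n_2$, while $h$ has weight $2n_1+2n_2 = 2(n_1+2n_2)-2n_2 = 2\gamma^{p}_{I}(G)-2n_2$. Hence $\gamma_R(G)\le h(V)=2\gamma^{p}_{I}(G)-2n_2$. If $V_2\neq\phi$, i.e.\ $n_2\ge 1$, then immediately $\gamma_R(G)\le 2\gamma^{p}_{I}(G)-2 \le 2\gamma^{p}_{I}(G)-1$ and we are done.

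The hard part will be the case $V_2=\phi$, where the crude promotion gives only $\gamma_R(G)\le 2\gamma^{p}_{I}(G)$, one short of the target. Here I expect to need a slightly finer argument: since $V_2=\phi$, every vertex of $V_0$ is dominated by exactly two vertices of $V_1$. I would try to show that I may leave one chosen vertex $w\in V_1$ assigned the value $1$ rather than promoting it to $2$, without destroying the Roman condition. Concretely, if I can find a vertex $w\in V_1$ such that every $0$-vertex still retains at least one $2$-neighbor after leaving $w$ at value $1$, then the resulting function has weight $2n_1-1=2\gamma^{p}_{I}(G)-1$. A clean way to guarantee such a $w$ exists is to pick any $w\in V_1$ that is itself adjacent to another vertex of $V_1$ (so that its own Roman requirement, being nonzero, is automatically fine), and argue that each $0$-neighbor of $w$ has a second $1$-neighbor which is promoted to $2$; one would also need to confirm $V_1\neq\phi$, which holds since otherwise $f\equiv 0$ fails to dominate any vertex.

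The main obstacle is therefore the $V_2=\phi$ case and specifically exhibiting a vertex of $V_1$ that can be spared from promotion. If a single spared vertex cannot always be found by a local argument, the fallback is to observe that when $V_2=\phi$ the function $f$ restricted to its support behaves like a structure in which every $V_0$-vertex has precisely two $V_1$-neighbors, and to choose $w$ as an endpoint or extremal vertex of that structure so that demoting it back to $1$ leaves all $0$-vertices still covered by a promoted $2$. I would present the promotion construction and weight computation first, dispose of the $V_2\neq\phi$ case in one line, and then carry out the extremal-vertex selection to handle $V_2=\phi$, yielding the uniform bound $\gamma_R(G)\le 2\gamma^{p}_{I}(G)-1$.
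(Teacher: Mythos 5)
Your proposal is correct and is essentially the paper's own argument: the paper likewise promotes positive values to $2$ while sparing a single vertex, defining $g=(V_0,\{u\},V_1\cup V_2-\{u\})$ for an arbitrary $u\in V_1$, and this works uniformly (no case split on $V_2$) for exactly the reason you identify --- every $0$-vertex has either a neighbor in $V_2$ or exactly two neighbors in $V_1$, at most one of which is the spared vertex. One correction to your write-up: the requirement that the spared vertex $w$ be adjacent to another vertex of $V_1$ is unnecessary and should be dropped, since a vertex with nonzero value has no Roman condition to satisfy regardless of its neighborhood, and such a $w$ need not even exist (take alternating $1$'s and $0$'s on $C_4$ or $P_5$); your own observation that each $0$-neighbor of $w$ retains a second, promoted $1$-neighbor is all that is needed, and it holds for every choice of $w\in V_1$, so no ``extremal'' selection is required. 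Incidentally, your case split quietly repairs an edge case the paper glosses over: when $V_1=\phi$ (e.g.\ $K_{1,3}$ with $2$ on the center) the paper cannot choose $u$ at all, whereas your pure promotion already gives weight $2\gamma^{p}_{I}(G)-2n_2\leq 2\gamma^{p}_{I}(G)-2$ in that situation.
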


\begin{proof}
Let $f=(V_0^f,V_1^f,V_2^f)$ be a $ \gamma^{p}_{I}  $-function of $G$. Let $ u\in V_1^f. $ Define $ g=(V_0^f,{u},V_1^f\cup V_2^f-{u}). $ Since every vertex in $ V_0^f $ is adjacent to exactly one vertex in $ V_2^f $ or two vertices in $ V_1^f $, in $g$ every vertex in $V_0^g$ will have at least one neighbor with weight 2. Therefore, this assignment gives a Roman dominating function. Now $ g(V) = 2(|V_1^f \cup V_2^f|)-1 \leq 2\gamma_I^p(G)-1. $ Therefore, $ \gamma_R(G) \leq 2\gamma^{p}_{I}(G) -1.$
\end{proof}

If $ \gamma_R(G) = 1$, $G$ is $ K_1, $ and $ \gamma^{p}_{I}(G)= 1 $ and vice versa. Similarly, if $ \gamma_R(G) = 2$ then $G$ has a universal vertex and $\gamma^{p}_{I}(G)= 2,  $ but the converse is not true. If $ \gamma_R(G) = 3$ and $\gamma^{p}_{I}(G)= 2,  $ then let $ G= K_{2,n}. $\\

\begin{thm} \label{thm9}
Given $a,b$ $ \geq 3 $ such that $ a\leq 2b -1, $ then there exists  a graph $G$ such that $ \gamma_R(G) = a\   and\  \gamma^{p}_{I}(G)= b. $
\end{thm}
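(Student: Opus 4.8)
The plan is to read the hypothesis $a\le 2b-1$ as exactly the bound coming from the preceding lemma ($\gamma_R(G)\le 2\gamma^{p}_{I}(G)-1$), so it is the necessary constraint; the whole content is sufficiency, i.e.\ an explicit graph $G$ with $\gamma_R(G)=a$ and $\gamma^{p}_{I}(G)=b$ for every admissible pair. I would build $G$ from a few gadgets and lean on the fact that both parameters are additive over connected components (a PID\nobreakdash-function and a Roman dominating function each restrict to, and reassemble from, the components): adding an isolated vertex raises both invariants by $1$, and a copy of $K_2$ or $P_3$ raises both by $2$, which supplies the fine ``diagonal'' adjustments once a seed with the correct value of $a-b$ is in hand. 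The construction then splits according to the sign of $a-b$, with two opposite engines.

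For the regime $a\ge b$ the engine is a gadget that makes the lemma tight by exploiting the single structural difference between the two parameters: two weight\nobreakdash-$1$ neighbours satisfy the perfect Italian condition at a $0$\nobreakdash-vertex but do not satisfy the Roman condition, which needs a weight\nobreakdash-$2$ neighbour. Take $b$ pairwise non\nobreakdash-adjacent vertices $x_1,\dots,x_b$ and, for a chosen graph $M$ on $\{1,\dots,b\}$, attach to each edge $ij\in E(M)$ three ``connector'' vertices joined only to $x_i$ and $x_j$. Assigning $f(x_i)=1$ and $0$ to all connectors is a PID\nobreakdash-function of weight $b$, so $\gamma^{p}_{I}=b$; on the Roman side every connector needs one of its two $x$'s to carry weight $2$, the triple thickening makes spending weight on connectors never profitable, so the weight\nobreakdash-$2$ vertices must form a vertex cover of $M$ while each uncovered $x_i$ still needs weight $1$, giving $\gamma_R=b+\tau(M)$, where $\tau$ is the vertex cover number. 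As $M$ runs from the empty graph to $K_b$, $\tau(M)$ sweeps $\{0,1,\dots,b-1\}$, so this family realises every $a\in[b,2b-1]$; the extreme $a=2b-1$ comes from the complete thickened link $M=K_b$, the general pattern already visible in $C_4$ and $K_{2,3}$, which realise $(3,2)$.

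For the complementary regime $3\le a<b$ I would invoke the reverse phenomenon: a graph can be dominated by very few weight\nobreakdash-$2$ vertices yet be costly to dominate perfectly\nobreakdash-Italianly. A complete multipartite graph $K_{n_1,\dots,n_r}$ with $r\ge 4$ and all $n_i\ge 3$ has $\gamma_R=4$ (two parts cover everything) while $\gamma^{p}_{I}=n$ by our evaluation for complete $r$\nobreakdash-partite graphs, so $\gamma^{p}_{I}$ is as large as desired at fixed small $\gamma_R$; taking such a seed disjointly with $a-4$ isolated vertices yields $\gamma_R=a$ and $\gamma^{p}_{I}=(b-a+4)+(a-4)=b$ whenever $b-a\ge 8$. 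The small\nobreakdash-gap cases $b-a\in\{1,\dots,7\}$ and the value $a=3$ I would handle with ad hoc tunable seeds, e.g.\ a near\nobreakdash-universal vertex $u$ (degree $n-2$, missing one vertex $v$, no universal vertex present, forcing $\gamma_R=3$) sitting over a cycle $C_m$ only part of which is joined to $v$, whose $\gamma^{p}_{I}$ grows without bound in $m$; the component\nobreakdash-additive adjustments then correct the value of $b$ exactly.

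The step I expect to be the real obstacle is the pair of lower bounds $\gamma_R(G)\ge a$ and $\gamma^{p}_{I}(G)\ge b$, i.e.\ optimality of the exhibited functions; the upper bounds are merely the displayed assignments. For the link gadget I must prove that no Roman function beats ``vertex cover in $2$'s plus leftover $1$'s'' — precisely, that putting weight on connector vertices never helps, which is exactly where the triple thickening is needed — and I must rule out a PID of weight below $b$. Symmetrically, in the $a<b$ constructions the work is to certify that the exactly\nobreakdash-$2$ perfect condition cannot be met cheaply despite cheap ordinary/Roman domination. Packaging these optimality arguments uniformly across the case split, and verifying that the constructions for small $a$ and bounded $b-a$ hit every target value, is the bulk of the proof.
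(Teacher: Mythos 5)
Your first engine (the regime $a\ge b$) is sound, and it is worth noting that it is not merely a variant of the paper's Case~d but a \emph{repair} of it: the paper uses a single connector $u_{ij}$ per pair, and for that graph the claimed value $\gamma_R=2b-1$ is false --- for $b=3$ the graph is $C_6$ with $\gamma_R=4$, and in general assigning $2$ to $v_1,\dots,v_{b-2}$ and $2$ to the single connector $u_{b-1,b}$ is a Roman dominating function of weight $2b-2$. Your triple thickening is exactly what blocks such moves, since a weight-$2$ connector leaves its two siblings undominated; with it, $\gamma_R(G_M)=b+\tau(M)$ is provable. The two optimality claims you flag as ``the real obstacle'' are both true and close with short counting arguments; e.g.\ for the PID lower bound, if $Z$ is the set of $x_i$ with $f(x_i)=0$, each such $x_i$ sees connector weight exactly $2$, each connector is seen by at most two $x_i$'s, so the total connector weight is at least $|Z|$ and $f(V)\ge (b-|Z|)+|Z|=b$. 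So this half, once those lemmas are written out, is correct and arguably better than the paper's treatment.

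The genuine gap is the regime $3\le a<b$. Your only concrete construction there (a complete multipartite seed $K_{n_1,\dots,n_r}$ with $r\ge4$, all $n_i\ge3$, plus $a-4$ isolated vertices) forces $a\ge4$ and $n=b-a+4\ge12$, i.e.\ $b-a\ge8$; every pair with $1\le b-a\le7$, and every pair with $a=3<b$, is deferred to ``ad hoc tunable seeds'' that are never exhibited or verified. This cannot be patched from what you have: your component additions ($K_1$, $K_2$, $P_3$) shift $\gamma_R$ and $\gamma^{p}_{I}$ by equal amounts, so they never change the difference $b-a$, and by the paper's Theorem~2.6 no complete multipartite seed achieves a difference below $8$ (parts of size $1$ or $2$, or $r=3$, collapse $\gamma^{p}_{I}$ to $2$ or $3$). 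So each missing difference needs a genuinely new seed, together with precisely the kind of exact-PID-value certification you acknowledge as the hard part; the sketched ``near-universal vertex over a partially joined cycle'' does not come with either the construction details or those proofs. This is exactly the regime the paper handles with its Cases~a and~b: a cone vertex $u$ over the disjoint union of a large independent set and a path $P_{b-3}$ (or $P_{b-4}$ plus a $K_2$), with a pendant vertex, which realizes $(3,b_0)$ or $(4,b_0)$ for \emph{every} $b_0>a$, i.e.\ every difference, after which diagonal adjustments finish. Without a seed family of tunable difference, your proof does not cover all pairs the theorem asserts.
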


\begin{proof}

\ni{\bf Case a:}  $ a\geq 3 $,  $ b\geq a+1 $ and $a$ is odd.\\

Consider $ K_p^c \vee P_{b-3}, $ where $p$ is arbitrarily large. Attach a vertex $u$ to every vertex of $ K_p^c \vee P_{b-3} $  and a pendent vertex $v$ to an end vertex of $ P_{b-3}. $ Then  $ \gamma_R(G) = 3$ where $ \gamma_R $-function $f$ can be defined as $f(u)=2,\  f(v)=1$ and $ f(v_i)=0 $ for all other vertices. Also, $\gamma^{p}_{I}(G)= b $ where $ \gamma_I^p $-function $g$ can be defined as $g(u)=2,\  g(v)=1$ and all the vertices of $ P_{b-3} $ has weight 1. If we attach a $ P_3 $ to the vertex $u$ by an edge then $ \gamma_R(G) = 3+2=5  $ and $ \gamma^{p}_{I}(G)=b+2 $. Similarly, by attaching $ P_{3k} $ to the already attached  $ P_3 $, we can get $ \gamma_R(G) = 3+2k  $ and  $ \gamma^{p}_{I}(G)=b+2k, b > a. $\\

\ni {\bf Case b:}  $ a\geq 4 $,  $ b\geq a+1 $ and $a$ is even. \\

Consider $ K_p^c \vee P_{b-4} ,$ where $p$ is arbitrarily large. Attach a vertex $u$ to every vertex of $ K_p^c \vee P_{b-4} $  and attach both the vertices $p$ and $q$ of $ K_2 $ to an end vertex of $ P_{b-4} $. Then $ \gamma_R(G)=4, $ where $ \gamma_R $-function $f$ can be defined as $f(u)=2$ and $f(p)=f(q)=1$. Also,   $ \gamma^{p}_{I}(G)=b$, where $ \gamma_I^p $-function $g$ can be defined as $g(u)=2,\  g(p)=g(q)=1$ and all the vertices of $ P_{b-4} $ has weight 1. Attach a $ P_3 $ with $u$ by an edge then $ \gamma_R(G)= 4+2  $ and $ \gamma^{p}_{I}(G)=b+2 $ . Similarly, by attaching  $ P_{3k} $ as in the previous case  we can get $ \gamma_R(G) = 4+2k  $ and  $ \gamma^{p}_{I}(G)=b+2k, b > a. $\\

\ni{\bf Case c:}  $a=b$ and $a$ is odd.\\

Consider $ P_{2a-1} $. Let $ v_1,v_2,v_3,...,v_{2a-1} $ be the vertices of $ P_{2a-1} $. Let $u$ be a vertex which is attached to $ v_2,v_4,v_6,...v_{2a-2} $ and also $ v_1 $ and $ v_{2a-1}. $ Then $ \gamma_R(G)= a$, where $ \gamma_R $-function $f$ can be defined as $f(u)=2,$  $ f(v_3)=f(v_5)=f(v_7)=...=f(v_{2a-3})=1, $ $f(v)=0$, for all other vertices and $ \gamma^{p}_{I}(G)=a$, where $ \gamma_I^p $-function $g$ can be defined as  $ g(v_1)=g(v_3)=g(v_5)=...=g(v_{2a-1})=1 $, $g(v)=0$, for all other vertices.  In particular, when $a=5$ the graph is given in Figure 1.\\

\begin{figure}[ht!]
	\centering
	
	\includegraphics[width=25mm]{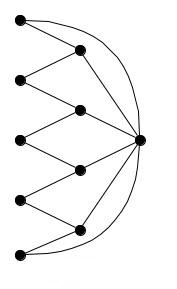}
	\caption{A graph with $a=b=5$ \label{overflow}}
\end{figure}

Similarly, we can construct all graphs with $ \gamma_R(G)=\gamma^{p}_{I}(G)$ when $ \gamma_R(G) $ is odd.
So we have constructed all graphs with $ a\leq b.$\\

\ni{\bf Case d:} $ a>b $.\\

 Let $G$ be the graph constructed as follows. Let $ v_1,v_2,,...,v_b $ be a set of independent vertices. Corresponding to every pair $ (v_i,v_j), i\neq j $ let $ u_{ij} $ be a vertex adjacent to $ v_i $ and $ v_j $ alone. Then $ \gamma^{p}_{I}(G)=b, $ where $f(v_i)= 1,$ for all $i=1,2,3,...,b$ and $ f(u_{ij})=0, $ for all $ i,j\in \{1,2,3,..b\} $ and $ i\neq j $ is a $ \gamma_I^p $-function of $G$. But $ \gamma_R(G)=2b-1, $ where $ g(v_i)=2, $ for $ i=1,2,3,...,b-1 $ and $g(v_b)=1 $ is a $ \gamma_R $-function. In particular, when $a=7$ and $b=4$, the graph is given in Figure 2. \\

\begin{figure}[ht!]
	\centering
	
	\includegraphics[width=35mm]{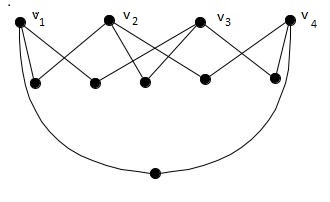}
	\caption{A graph with $a= 7$ and $b=4$ \label{overflow}}
\end{figure}
 Now, if we delete one vertex from the $ ^bC_2 $ vertices, $ \gamma^{p}_{I}(G) $ will not change, whereas $\gamma_R(G)  $ reduces by 1. (Note that $ h(v_k)=2 $ for all $k\in \{1,2,3,...,b\} \setminus\{i\}$ and $ h(v_i)=1 $ is a $ \gamma_R(G) $-function of $G$). Proceeding like this we can reduce $ \gamma_R(G) $ up to $ \gamma_I^p+1.$ Hence the theorem.

\end{proof}

\section{Relation with perfect domination number}
In this section, we study the relationship between perfect Italian domination number and perfect domination number of a graph. The following are simple obsevations.\\

\ni\textbf{Obsevation 6:} $ \gamma_{I}^p(G) \leq 2\gamma_{p}(G)$.\\
\begin{proof}
Let $ P $ be a $ \gamma_{p} $-set of $ G $. Then \\
\[
f(v) =
\begin{cases}
 2;\  if\  v \in P,\\
0;\ otherwise.
\end{cases}
\]
is a PID-function. Therefore, $ \gamma_{I}^p(G) \leq 2\gamma_{p}(G)$.
\end{proof}
\ni \textbf{Obervation 7:} If there exists a $ \gamma_{I}^p $-function of $ G $ such that $ V_1 = \phi$ then $ \gamma_{I}^p(G)= 2\gamma_{p}(G) $.
\begin{proof}
	If there exists a $ \gamma_{I}^p $-function of $ G $ such that $ V_1 = \phi$, then the vertices which are assigned the value 2 forms a $ \gamma_{p} $-set. Therefore, $ \gamma_{p}(G) \leq \frac{1}{2} \gamma_{I}^p(G) $. Hence, $ \gamma_{I}^p(G) \geq 2 \gamma_{p}(G) $ and by obervation 6, $ \gamma_{I}^p(G)= 2\gamma_{p}(G) $.
\end{proof}

We know that the Italian domination number of a graph $ G $ lies between $ \gamma(G) $ and $ 2\gamma(G) $. Here we have proved that $ \gamma_{I}^p(G) \leq 2\gamma_{p}(G)$. It is most expected that $ \gamma_{p}(G) $ serves as a lower bound for $ \gamma_I^p(G) $. But this is not true and $ \gamma_I^p(G) $ can be arbitrarily smaller than $ \gamma_{p}(G) $. the following theorem settles the corresponding realization problem.
\begin{thm}
	Given any two positive integers $ a$, and $ b $ such that $b\leq 2a $ there exists a graph $ G $ such that $ \gamma_{p}(G) =a $ and $ \gamma_{I}^p(G) =b $.
\end{thm}
\begin{proof}
	Let $ a $ and $ b $ be any two positive integers such that $b\leq 2a $.\\
	
	\ni\textbf{Case 1:} $a \leq b \leq 2a-1$.\\
	
	Consider k copies of $ P_5 $, say $ v_{i1}v_{i2}v_{i3}v_{i4}v_{i5} $ for $ i=1,2,...k $, where $ v_{i1}=v_{j1} $, for all $ i,j \in \{1,2,...,k\} $. Then $ \gamma_{I}^p(G)= 2k+1 $, where $ \gamma_{I}^p $-function $ f $ can be defined as $ f(v_{i1})=f(v_{i3})= f(v_{i5})=1 $, for all $ i=1,2,...,k $ and 0, otherwise. Also $ \gamma_{p}(G) = k+1$, where the perfect dominating set consist of the vertices $ v_{i1} $ and $ v_{i4} $, $ i=1,2,...,k $. If we extend the path $ v_{11}v_{12}v_{13}v_{14}v_{15} $ to a path of length $ 2l+5 $, then $ \gamma_{I}^p(G)=2k+1+l $ and $ \gamma_{p}(G)=k+1+l $. Let $ k=b-a $ and $ l=2a-b-1 $, so that $ \gamma_{p}(G)=a $ and $ \gamma_I^p=b $.\\
	
	\ni\textbf{Case 2} $ b=2a $.\\
	
	Let $ G $ be the graph $ P_a:v_1v_2,...,v_a $, with atleast two pendent vertices attached to every $ v_i,\ i=1,2,...,a $. Then $ \gamma_I^p(G)=2a $ and $ \gamma_{p}(G)=a $.\\
	
	\ni\textbf{Case 3:} $ a > b $.\\
	
	\ni\textbf{Subcase (a):} $ b-a $ is even.\\
	
	Let $ G=K_2^c + k K_2  $. Then $ \gamma_I^p(G)=2 $, where vertices of $ K_2^c $ is assigned the value 1 and others 0, is the $ \gamma_I^p $-function of $ G $. But the $ \gamma_{p} $-set contains all the vertices of the graph and hence $ \gamma_{p}(G) = 2k+2$. By attaching a path of length $ 2l $ to one of the vertices of $ K_2^c $, as in case 1, we get $ \gamma_I^p(G)=2+l $ and $ \gamma_{p}(G)= 2k+2+l $. Let $k= \frac{a-b}{2}$ and $ l=b-2 $ so that $ \gamma_{p}(G)=a $ and $ \gamma_I^p=b $.\\
	
	\ni\textbf{Subcase (b):} $ b-a $ is odd.\\
	
	Let $ G= K_2^c + (K_3 \cup kK_2) $. Then as in the previous case, $ \gamma_I^p(G)=2 $ and $\gamma_{p}(G) = 2k+5$. By attaching a path of length $ 2l $ to one of the vertices of $ K_2^c $, we get $ \gamma_I^p(G)=2+l $ and $ \gamma_{p}(G)= 2k+5+l $. Let $ k=\frac{a-b-3}{2} $ and $ l=b-2 $ so that $ \gamma_{p}(G)=a $ and $ \gamma_I^p=b $.\\
	\end{proof}
\section{Mycielskian of a graph}
In this section, we study the relationship between the perfect Italian domination number of Mycielskian of a graph and the perfect domination number of the graph.
\begin{thm}
	For a connected graph $ G $, $ \gamma_I^p(M(G)) \leq 2\gamma_{p}(G)+1 $.
\end{thm}
\begin{proof}
	Let $ P $ be a $ \gamma_{p} $-set of $ G $. Let $ P' = \{u_i,v_i: v_i \in P \} \cup \{ w\} $. Define a PID-function as follows.
	\[
	f(v)=
	\begin{cases}
	1;\ if\ v \in\ P',\\
	0;\ otherwise.
	\end{cases}
	\]
	Then $ f(M(G))= 2\gamma_{p}(G)+1 $. Therefore, $ \gamma_I^p(M(G)) \leq 2\gamma_{p}(G)+1 $.
\end{proof}
Although, many graph classes satisfy equality, it may be noted that there are infinite number of families of graph which satisfy strict inequality. If we conside the graph $ G= K_2^c \bigvee kK_2  $, then $ \gamma_I^p(M(G))=6 $ and $ \gamma_{p}(G)=2k+2 $, so that the difference can be arbitrarily large. An illustration where $ k=2 $ is given in Figure 3, in which $ f(v_3)=f(v_4)=f(u_1)=f(u_2)=1 $ and $ f(w)=2 $ gives a $ \gamma_{I}^p $-function of $ M(G) $.
\begin{figure}[h] \label{Fig1}
	\centering
	\includegraphics[width=8.5cm]{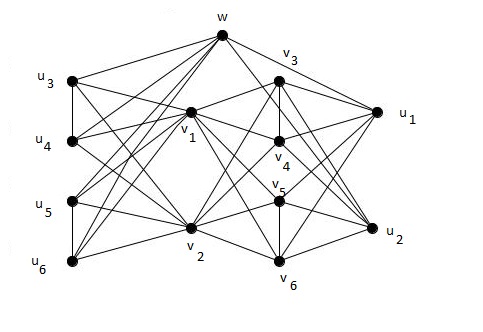}\\
	\caption{$M(K_2^c\vee 2K_2)$.\label{overflow} }	
\end{figure}
\begin{thm} \label{thm10}
	If $ G $ has a $ \gamma_I^p $-function such that $ V_1 = \phi$, then  $ \gamma_I^p(M(G))=\gamma_I^p(G)+1 = 2\gamma_{p}(G)+1$.
\end{thm}
\begin{proof}
	Assume that $ G $ has a $ \gamma_I^p $-function $ f $ such that $ V_1^f= \phi $. We can define a perfect Italian dominating function $ g: V(M(G))\rightarrow \{0,1,2\} $ as follows.
	\[
	g(v)=
	\begin{cases}
	1;\  for\ v=v_i\ and\  u_i\ such\ that\ f(v_i)=2\ and \  v=w,\\
	0;\ otherwise.
	\end{cases}
	\]
	Therefore, $ \gamma_I^p(M(G))\leq\gamma_I^p(G)+1  $.\\
	
	To prove the reverse inequality, let $ f $ be a PID-function of $ M(G) $.\\
	
	\ni\textbf{Case1:} $ \sum_{i=1}^{n}f(u_i) \neq 2$.\\
	
	Define $ g:V(G)\rightarrow \{0,1,2\} $ as follows.
	\[
	g(v_i)=
	\begin{cases}
	2;\ if\ f(v_i)+f(u_i)=2,\\
	1;\ if\ f(v_i)+f(u_i)=1,\\
	0;\ otherwise.
	\end{cases}
	\]
	Note that $ f(v_i)+f(u_i) \leq 2 $, for every $ i=1,2,...,n $.
	Then $ g $ is a PID-function of $ G $, since $ N_G(v_i) = N_{M(G)}(u_i) \bigcap V(G)$.\\
	
	\ni\textbf{Case2:} $ \sum_{i=1}^{n}f(u_i) = 2 $.\\
	
	In this case, either there exists one vertex $ u_i $ with $ f(u_i)=2 $ or there exist two vertices $ u_i $ and $ u_j $ with $ f(u_i)=f(u_j)=1 $.\\
	
	\ni\textbf{Subcase(a):} There exists one vertex $ u_i $ with $ f(u_i)=2 $.\\
	
	Without loss of generality let $ f(u_1) =2$ and $f(u_i)=0 $, for all $ i=2,3,...,n $. If possible assume that there exists a $ v_i \in N(u_1)$ such that $ f(v_i)=0 $. But, we have $ f(u_i) =0$ which implies that $ \sum f(N(u_i))=2 $ and hence, $ \sum f(N(v_i))= \sum f(N(u_i)) + f(u_1)-f(w)=2$. This implies, $ f(w)=2 $. Therfore, $ \sum_{ x\in N(u_i)-\{w\}}f(x)=0, $ for all $ i=2,3,...,n $. This implies $ f(x)=0 $ for $ x \in N(v_i) $, for all $ i=2,3,...,n $. But, then $ v_i's $ are perfect Italian dominated by $ u_i $. This means $ v_1 $ is a universal vertex of $ G $ and also $ f(M(G)) \geq 4 $. But, $g(v_1) =g(u_1) = g(w)=1$ is a PID-function of $M(G)$  with weight 3, which is a contradiction to the fact that $ f $ is a $ \gamma_{I}^p $- function of $ M(G) $. Therefore, none of the vertices in $ V(G) $ is perfect Italian dominated by $ u_1 $. Therefore, $ f $ restricted to $ G $ is a PID-function of $ G $ and $ f(G) \leq f(M(G))-2$. Therefore, $ \gamma_I^p(G) \leq \gamma_I^p(M(G))-2 $ so that, $ \gamma_I^p(M(G))\geq\gamma_I^p(G)+2  $, which is a contradiction to the fact that  $ \gamma_I^p(M(G))\leq\gamma_I^p(G)+1  $. Therefore, such a case dose not exist.\\
	
	\ni\textbf{Subcase(b):} There exist two vertices $ u_i $ and $ u_j $ with $ f(u_i)=f(u_j)=1 $.\\
	
	Without loss of generality, let $ f(u_1) =f(u_2) =1$ and $f(u_i)=0 $, for all $ i=3,4,...,n $. As in the above case, there dose not exist $ v_i \in N(u_1) \bigcup N(u_2) $, $ i \neq 1,2 $ such that $ f(v_i)=0 $. If $ v_1,\ v_2 \notin N(u_1) \bigcup N(u_2)$, then again $ f/G $ is a PID-function of $ G $ and hence $ \gamma_I^p(G) \leq \gamma_I^p(M(G))-2 $, which is not possible. Therefore, assume that $ v_1,\ v_2\ \in \ N(u_1)\bigcup\ N(u_2) $. i.e., $ v_1 \in N(u_2) $ and $ v_2 \in N(u_1) $. If $ f(v_1)=f(v_2)=0 $ then define $ g:V(G)\rightarrow \{0,1,2\} $ as follows.\\
	\[
	g(v_i)=
	\begin{cases}
	f(v_i);\ i\neq\ 1,2,\\
	1;\ i=1,\\
	0;\ i=2.
	\end{cases}
	\]
	If $ f(v_1)=0 $ and $ f(v_2)\neq 0 $ then define $ g:V(G)\rightarrow \{0,1,2\} $ as follows.\\
	\[
	g(v_i)=
	\begin{cases}
	f(v_i);\ i\neq\ 2,\\
	f(v_2)+1; i=2.
	\end{cases}
	\]
	The case $ f(v_1) \neq 0 $ and $ f(v_2)=0 $ can be delt similarly. If both $ f(v_1) $ and $ f(v_2) $ are non-zero then $ f/G $ is a PID-function, which again leads to a contradiction. In all the cases $ \gamma_I^p(G) \leq \gamma_I^p(M(G))-1 $. Therefore, $\gamma_I^p(M(G))=\gamma_I^p(G)+1$.\\
	By Observation 7, we know that $ \gamma_{I}^p(G)= 2\gamma_{p}(G) $. Therefore, $ \gamma_I^p(M(G))= 2\gamma_{p}(G) +1$.
\end{proof}
\begin{cor}
	For any graph $ G $ with a universal vertex, $ \gamma_I^p(M(G)))=3 $.
\end{cor}
\section{Conclusion and Open Problems}
In this paper, we have already given some examples of graphs which satisfy $ \gamma_{I}^p(G)=2\gamma_{p}(G) $. Let $ G $ be a graph with $ n $ vertices and let  $ \Im = \{H_i: i=1,2,...,n\}$ be a family of n graphs (not necessarily non-isomorphic). We define the corona of $ G $ with $ \Im $, $ G \odot \Im $ as the graph with vertex set $ V(G) \cup V(H_i) $, $ i=1,2,...,n $ and edge set $ E(G) \cup E(H_i) \cup \{v_iu: u \in V(H_i),$ for\ all $i=1,2,...,n \} $. When $ H_i = H $, for all $ i $, $ G \odot \Im $ reduces to the usual corona of $ G $ and $ H $, $ G\odot H $. $ G \odot \Im $ satisfies, $ \gamma_{I}^p(G\odot \Im)=2\gamma_{p}(G\odot \Im) $, if $ |V(H_i)| > 1 $ for all $ i=1,2,...n $. Any supergraph of the above graph obtained by adding edges between $ H_i's $, to some extend, also satisfy the above equality.

\begin{figure}[h] \label{Fig4}
	\centering
	\includegraphics[width=7.5cm]{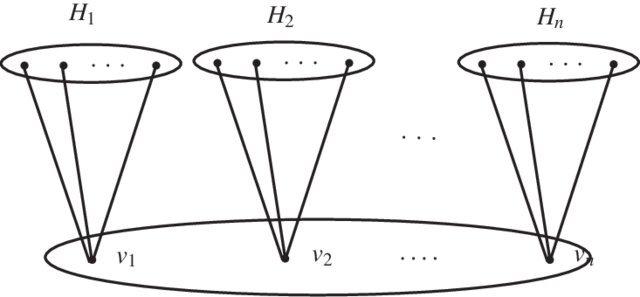}\\
	\caption{Structure of $ G\odot \Im $.\label{overflow} }	
\end{figure}
Though we have infinitely many graphs which satisfy this equality, the charecterization problem is still open.\\

\ni\textbf{ Problem 1:} Charecterize graphs for which $ \gamma_{I}^p(G)=2\gamma_{p}(G). $\\

In Theorem \ref{thm10} we have proved that, if $ G $ has a $ \gamma_I^p $-function such that $ V_1 = \phi$, then  $ \gamma_I^p(M(G))=\gamma_I^p(G)+1 = 2\gamma_{p}(G)+1$. Let $ G $ be the graph $ P_3 \odot K_1$. Here, $ \gamma_{p}(G)=3 $, $ \gamma_I^p(G)=4 $ and $ \gamma_I^p(M(G)) =\ 7\ = 2\gamma_{p}(G) +1$. But there dose not exist a  $ \gamma_I^p $-function of $ G $ in which $ V_1 = \phi $. Therefore, the converse of the theorem is not true for the equality $ \gamma_I^p(M(G))=2\gamma_{p}(G)+1 $. But we strongly belive that the converse of the Theorem \ref{thm10} is true for $ \gamma_I^p(M(G))= \gamma_I^p(G)+1 $.  So we have the following open problems.\\

\ni\textbf{Problem 2:} Charecterize graphs for which   $ \gamma_I^p(M(G)) = 2\gamma_{p}(G)+1$. \\

\ni\textbf{Problem 3:}  Prove the converse of Theorem 5.2 for the equality $ \gamma_I^p(M(G))= \gamma_I^p(G)+1 $. ie; if $ \gamma_I^p(M(G))= \gamma_I^p(G)+1 $, then there exists a $ \gamma_{I}^p $-function of $ G $  for which $ V_1 = \phi $.\\

We know that, if there exists a  $\gamma_I^p  $-function of $ G $ such that $ V_1 = \phi $ then $ \gamma_{I}^p(G)=2\gamma_{p}(G)$. Therefore, if we can prove Problem 3 and if Class A and Class B are the classes of graphs which satisfies Problem 1 and 2 respectively, then the intersection of Class A and Class B is precisely the collection of graphs for which there exists a $\gamma_I^p$-function such that $V_1 = \phi$.\\

{}

\end{document}